\newcommand{\vol}{{\rm vol}}
\newtheorem{defi}{Definition}
\newtheorem{thm}{Theorem}
\newtheorem{prop}{Proposition}
\renewcommand{\S}{\mathbb{S}}
\begin{document}
\title[Area minimizing unit vector fields and minimally immersed Klein Bottles]{Area minimizing unit vector fields on antipodally punctured unit 2-sphere and minimally immersed Klein Bottles}

\author{Fabiano Brito$^1$}
\author{Jackeline Conrado$^2$}
\author{Adriana Nicoli$^3$}
\author{Icaro Gon\c calves$^4$}

\thanks{The second and third was financed by Coordena\c c\~ao de Aperfei\c coamento de Pessoal de N\'ivel Superior- Brasil (CAPES) - Finance Code 001.}

\address{Centro de Matematica, Computa\c c\~{a}o e Cogni\c c\~{a}o, 
Universidade Federal do ABC, Santo Andr\'{e}, 09210-170, Brazil.}
\email{fabiano.brito@ufabc.edu.br, icaro.goncalves@ufabc.edu.br}

\address{Dpto. de Matem\'{a}tica, Instituto de Matem\'{a}tica e Estat\'{i}stica, 
Universidade de S\~{a}o Paulo, R. do Mat\~{a}o 1010, S\~{a}o Paulo-SP,
05508-900, Brazil.}
\email{avnicoli@usp.br, jconrado@usp.br}

\subjclass[2019]{}

\begin{abstract} 
We provide a lower value for the volume of a unit vector field tangent to an antipodally Euclidean sphere $\mathbb{S}^2$ depending on the length of an ellipse determined by the indexes of its singularities. In addition, for minimizing vector fields having specific pair of indexes, we show that their image coincides with the image of minimally immersed Klein bottles.    
\end{abstract}

\maketitle
\section{Introduction and main results}
We study, in this paper, the relation between indices and total area of unit vector fields defined on antipodally punctured two dimensional sphere $\mathbb{S}^2$.

In the first result, we establish sharp lower bounds for the total area of unit vector fields on $\mathbb{S}^2 \backslash \{N,S\}$, where $N$ and $S$ are antipodal points of $\mathbb{S}^2$.

\begin{thm}\label{Main1} Let $\vec{v}$ be a unit vector field defined on $M = \mathbb{S}^{2} \backslash \left\{N,S\right\}$. If $k = \sup\left\{ I_{\vec{v}}(N), I_{\vec{v}}(S)\right\}$, then
	\[\vol(\vec{v}) \geq \pi L(\xi_k), \]
	where $L(\xi_k)$ is the length of the ellipse $\frac{x^2}{k^2}+\frac{y^2}{(k - 2)^2} = 1$ with $k > 2$ and $I_{\vec{v}}(P)$ stands for the Poincar\'e index of $\vec{v}$ around $P$.
\end{thm}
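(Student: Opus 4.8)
The plan is to pass to spherical coordinates on $M=\mathbb{S}^{2}\setminus\{N,S\}$, rewrite $\vol(\vec v)$ as an explicit double integral, and then extract the bound by throwing away one manifestly nonnegative term and applying Jensen's inequality in the longitude. To that end, write $M$ with coordinates $(\phi,\theta)\in(0,\pi)\times(\mathbb{R}/2\pi\mathbb{Z})$, round metric $d\phi^{2}+\sin^{2}\phi\,d\theta^{2}$, and positively oriented orthonormal frame $e_{1}=\partial_{\phi}$, $e_{2}=\tfrac{1}{\sin\phi}\,\partial_{\theta}$, so that $\vec v=\cos\psi\,e_{1}+\sin\psi\,e_{2}$ for an angle function $\psi$ whose differential $d\psi$ is globally defined on $M$. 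Then the integer $m:=\tfrac{1}{2\pi}\oint_{\{\phi=c\}}d\psi$ is independent of $c\in(0,\pi)$, a direct index count at $N$ gives $I_{\vec v}(N)=m+1$, and the Poincar\'e--Hopf relation $I_{\vec v}(N)+I_{\vec v}(S)=2$ then forces $I_{\vec v}(S)=1-m$; hence $k=\sup\{I_{\vec v}(N),I_{\vec v}(S)\}=|m|+1$, and $k>2$ means $|m|\ge 2$. Since $\nabla_{e_{2}}e_{1}=\cot\phi\,e_{2}$, one gets $\nabla_{X}\vec v=\big(X\psi+\cos\phi\,d\theta(X)\big)\vec v^{\perp}$ with $\vec v^{\perp}=-\sin\psi\,e_{1}+\cos\psi\,e_{2}$, so $\|\nabla\vec v\|^{2}=\psi_{\phi}^{\,2}+\big(\tfrac{\psi_{\theta}+\cos\phi}{\sin\phi}\big)^{2}$, and because the area element of the section inside the unit tangent bundle (with its Sasaki metric) is $\sqrt{1+\|\nabla\vec v\|^{2}}\,dA$,
\[
\vol(\vec v)=\int_{0}^{2\pi}\!\!\int_{0}^{\pi}\sqrt{\sin^{2}\phi\,(1+\psi_{\phi}^{\,2})+(\psi_{\theta}+\cos\phi)^{2}}\;d\phi\,d\theta.
\]

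Next I would discard the nonnegative term $\psi_{\phi}^{\,2}\sin^{2}\phi$ and, for each fixed $\phi\in(0,\pi)$, apply Jensen's inequality to the (strictly) convex function $x\mapsto\sqrt{\sin^{2}\phi+(x+\cos\phi)^{2}}$ along $\theta\mapsto\psi_{\theta}(\phi,\theta)$, whose mean over $[0,2\pi]$ is exactly $m$. This gives
\[
\int_{0}^{2\pi}\sqrt{\sin^{2}\phi+(\psi_{\theta}+\cos\phi)^{2}}\;d\theta\ \ge\ 2\pi\sqrt{\sin^{2}\phi+(m+\cos\phi)^{2}}=2\pi\sqrt{1+m^{2}+2m\cos\phi},
\]
so $\vol(\vec v)\ge 2\pi\int_{0}^{\pi}\sqrt{1+m^{2}+2m\cos\phi}\,d\phi$. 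The reflection $\phi\mapsto\pi-\phi$ shows this integral depends only on $|m|=k-1$, and the half-angle substitution $\phi=2t$, together with the identity $1+(k-1)^{2}+2(k-1)\cos 2t=k^{2}\cos^{2}t+(k-2)^{2}\sin^{2}t$, turns $\int_{0}^{\pi}\sqrt{1+(k-1)^{2}+2(k-1)\cos\phi}\,d\phi$ into $\tfrac12\int_{0}^{2\pi}\sqrt{k^{2}\cos^{2}t+(k-2)^{2}\sin^{2}t}\,dt=\tfrac12 L(\xi_{k})$. Hence $\vol(\vec v)\ge\pi L(\xi_{k})$. Tracking the equality cases shows the bound is attained precisely by $\psi_{\phi}\equiv 0$, $\psi_{\theta}\equiv m$, i.e. by the fields $\vec v=\cos(m\theta+c)\,e_{1}+\sin(m\theta+c)\,e_{2}$ with $m=k-1$.

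The routine parts are the coordinate expression for $\vol$ and the trigonometric bookkeeping identifying the remaining integral with $\pi L(\xi_{k})$; I do not expect trouble there. The points demanding care are: justifying the Sasaki area formula, and making sure the angle function $\psi$ and the identity $\oint d\psi=2\pi m$ are legitimate under the regularity assumed for $\vec v$, which need not extend across $N$ and $S$; and fixing orientation conventions at the two antipodal punctures so that $I_{\vec v}(N)=m+1$, $I_{\vec v}(S)=1-m$ and therefore $k=|m|+1$. The one genuine idea---that $\psi_{\phi}^{\,2}$ may be dropped \emph{before} estimating and the result is still sharp---is what makes the proof short, and sharpness is transparent because the extremal field has $\psi_{\phi}\equiv 0$.
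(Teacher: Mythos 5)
Your proof is correct and is essentially the paper's argument in different clothing: both reduce the volume integrand to the latitude and longitude derivatives of the angle function, discard the meridian derivative, and then exploit that the longitude derivative integrates over each parallel to the winding number $k-1$ (which the paper extracts via Stokes applied to the connection form $\omega_{12}$, and you extract directly from the degree of $\psi$). Your use of Jensen's inequality at the mean value $m$ is exactly the convexity fact underlying the paper's support-line bound $\sqrt{1+u^2}\ge\cos\varphi+u\sin\varphi$ with its specific choice $\tan\varphi_k(\alpha)=\tan\alpha+\tfrac{k-1}{\cos\alpha}$, so the two proofs coincide step for step; your version has the small added benefit of making the equality case transparent.
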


This is a natural extension of the theorem proved in \cite{BCJ} by P. Chacon, D. Johnson and the first author. In \cite{BCJ}, a general lower-bound for area of unit vector fields in $\mathbb{S}^2 \backslash \{N,S\}$ is established. It turns out to be the area of north-south vector field with both indices equal to $1$. In this context, our first theorem provides certain lower bounds for each class of indice.

We also exhibit minimizing vector fields $\vec{v}_k$ within each index class. These fields have areas given essentially by the length of ellipses depending just on the indices in $N$ and $S$.

In our second theorem, we prove that the images of the minimizing unit vector fields can be seen as the images of minimally immersed Klein bottles when the indices in $N$ and $S$ are even non zero natural numbers. 

\begin{thm}\label{Main2} 
	For every $k$, the set $\overline{\vec{v}_k(\mathbb{S}^2\backslash\{N,S\})}$ is the image of a minimal smooth immersed Klein bottle in $\mathbb{RP}^3 \cong T^1\S^2$.
	%The minimal surface $\overline {\vec{v_k} (\mathbb{S}^{2} \backslash \{N, S\})}$ is the image of an immersion of class $C^{\infty}$ of the Klein bottle in $T_1 \mathbb{S}^2$.
\end{thm}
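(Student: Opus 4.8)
The plan is: make the minimiser $\vec v_k$ and the target completely explicit, then produce the Klein bottle by unrolling $\vec v_k$ across the two poles, and finally deduce minimality from the sharpness already established in Theorem~\ref{Main1}. I take the index pair $(k,2-k)$ of $\vec v_k$ to consist of nonzero even integers with $k>2$, as in the abstract: this is exactly the range in which $\overline{\vec v_k(M)}$ is closed, since capping across a pole is obstructed by a fibrewise rotation of angle $k\pi$, which is trivial mod $2\pi$ precisely when $k$ is even.

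\emph{Normal form and ambient.} From the construction of the minimisers in Theorem~\ref{Main1} (where convexity of the volume integrand in the gradient of the angle function reduces matters to a rotationally symmetric ansatz, after which a one--variable functional is minimised), one may take $\vec v_k$, in polar coordinates $(\phi,\theta)$ with $\phi$ the colatitude from $N$, to make the constant angle $(k-1)\theta$ with the meridian frame, i.e.\ $\vec v_k=\cos((k-1)\theta)\,\partial_\phi+\sin((k-1)\theta)\,\tfrac{\partial_\theta}{\sin\phi}$. In a chart at $N$ its direction then has argument $k\theta$ and in an orientation--compatible chart at $S$ argument $\pi+(k-2)\theta$, giving the index pair $(k,2-k)$, and the substitution producing $\xi_k$ gives $\vol(\vec v_k)=\pi L(\xi_k)$. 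Identify $T^1\S^2$ with $SO(3)=\mathbb{RP}^3$ carrying the Sasaki metric, with double cover $S^3=SU(2)$; then $\vec v_k$ is the Euler--angle surface $(\phi,\theta)\mapsto R_z(\theta)R_y(\phi)R_z((k-1)\theta)$, which for even $k$ lifts to an explicit product of three one--parameter subgroups of $S^3$.

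\emph{Symmetry and topology.} Since rotation of an oriented surface by a fixed angle is parallel, $s\cdot(p,w)=\bigl(\rho_s p,\,r_{(k-1)s}((\rho_s)_*w)\bigr)$, with $\rho_s$ the rotation of $\S^2$ by $s$ about $NS$ and $r_c$ fibrewise rotation by the constant angle $c$, defines an $S^1$--action on $(T^1\S^2,g_{\mathrm{Sas}})$ by isometries, and $s\cdot\vec v_k(\phi,\theta)=\vec v_k(\phi,\theta+s)$. Hence $X:=\overline{\vec v_k(M)}$ has cohomogeneity one with orbit interval $[0,\pi]$, principal circle orbits over $0<\phi<\pi$, and boundary orbits the fibres $C_N=T^1_N\S^2$, $C_S=T^1_S\S^2$, traversed $|k|$ resp.\ $|k-2|$ times with finite stabilisers. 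Extend $f(\phi,\theta)=\vec v_k(\phi,\theta)$ across $\phi=0$ by $f(0,\theta)=\lim_{\phi\to0^+}\vec v_k(\phi,\theta)\in C_N$ and $f(-\phi,\theta)=\vec v_k(\phi,\theta+\pi)$ near $\phi=0$, analogously across $\phi=\pi$; for even $k$ this is $C^\infty$ and $2\pi$--periodic in $\phi$, so $f$ descends to a smooth $f\colon T^2=(\mathbb R/2\pi\mathbb Z)^2\to\mathbb{RP}^3$, which one verifies to be an immersion (at $\phi\in\{0,\pi\}$ the $\theta$--derivative sweeps the fibre and is independent of $\partial_\phi f$). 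Finally $f\circ\iota=f$ for the free, orientation--reversing involution $\iota(\phi,\theta)=(-\phi,\theta+\pi)$ of $T^2$, so $f$ factors as a smooth immersion $\bar f\colon K=T^2/\iota\to\mathbb{RP}^3$ of a Klein bottle, with $\bar f(K)=X$; write $\bar C_N,\bar C_S\subset K$ for the circles over the poles.

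\emph{Minimality.} Because $\vol(\vec v)=\operatorname{area}(\operatorname{graph}\vec v)$ in $(T^1\S^2,g_{\mathrm{Sas}})$ and, by Theorem~\ref{Main1}, $\vec v_k$ minimises $\vol$ among unit fields on $M$ with index pair $(k,2-k)$, the open surface $\Sigma=\vec v_k(M)$ is area--stationary under variations through graphs of such fields. Perturbing the angle function $(k-1)\theta\mapsto(k-1)\theta+tf$ by an arbitrary compactly supported $f\in C^\infty(M)$ preserves the indices and realises, to first order, the variation of $\Sigma$ with variation field the vertical lift of $f\,\vec v_k^{\perp}$; as a graph of a section meets the one--dimensional fibres transversally, this vertical vector has nowhere--vanishing normal component, so these variations exhaust all smooth compactly supported normal variations of $\Sigma$ and $\Sigma$ is minimal. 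The mean curvature of $\bar f$ is continuous on $K$ and vanishes on the dense set $\bar f^{-1}(\Sigma)=K\setminus(\bar C_N\cup\bar C_S)$, hence vanishes identically, so $K$ is minimal. (Alternatively $\vec H\equiv0$ can be verified outright on the quaternionic parametrisation above.)

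\emph{Main difficulty.} Everything delicate sits in the passage to the closure: showing that adjoining the pole fibres produces a genuinely smooth immersed surface rather than a singular set — which is what forces the indices to be even — and transporting minimality across $C_N$ and $C_S$, for which one needs precisely the smoothness of $\bar f$ together with density of $\Sigma$ in $\bar f(K)$. The cohomogeneity--one symmetry is the organising device making both the topological identification and this transfer routine, by reducing every global question to the single--variable profile, which is — not by accident — the ellipse $\xi_k$.
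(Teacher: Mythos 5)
Your proposal is correct and, while it lands on the same structure (evenness of $k$ as the smoothness condition at the poles, Klein bottle from two pieces, minimality from the sharpness of Theorem~\ref{Main1}), it takes a genuinely different route in both halves. For the topology, the paper splits $\S^2\setminus\{N,S\}$ into the two hemispheres, models the closure of $\vec v_k$ over each by the explicit ruled surface $\mathcal{M}_k=\{(r\cos t,r\sin t,\cos kt,\sin kt)\}\subset D_{\pi/2}\times\S^1$ carried into $T^1\S^2$ by the parallel-transport trivialization, shows each piece is an immersed M\"obius band (evenness enters via $\cos(k(t+\pi))=\cos(kt)$), and glues the two bands along their common boundary. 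You instead unroll $\vec v_k$ across the poles to a smooth torus parametrization and quotient by the free orientation-reversing involution $\iota(\phi,\theta)=(-\phi,\theta+\pi)$; evenness enters at exactly the same point (triviality of the fibre rotation by $k\pi$), and your asserted smoothness/immersion check at $\phi\in\{0,\pi\}$ is, in the parallel-transport chart, precisely the paper's observation that $j(r,t)=(r\cos t,r\sin t,\cos kt,\sin kt)$ is a smooth immersion through $r=0$; the two presentations (M\"obius bands glued along their boundary versus $T^2/\iota$) give the same Klein bottle. On minimality your write-up is more complete than the paper's: the paper disposes of it in one sentence (``area-minimizing implies geometrically minimal''), whereas you supply the first-variation argument --- compactly supported perturbations of the angle function preserve the indices, their variation fields are vertical lifts whose normal components vanish nowhere because a graph is transverse to the one-dimensional fibres, hence they realize every compactly supported normal variation --- and you extend minimality across the pole fibres by continuity of the mean curvature on a dense set, a step the paper omits entirely. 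What your symmetry framework buys is this clean transfer of minimality to the closure; what the paper's explicit $\mathbb{R}^4$ model buys is that smoothness at the poles is manifest rather than asserted. The two facts you state without verification (that $\vol(\vec v_k)=\pi L(\xi_k)$, i.e.\ equality throughout the chain of inequalities in Theorem~\ref{Main1}, and the regularity of the unrolled map at the poles) are asserted at essentially the same level in the paper itself, so I do not count them as gaps.
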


The second result is based on the idea of studying the closure in $T^1(\mathbb{S}^2)$ of the images of unit vector fields in  $\mathbb{S}^2\backslash\{N,S\}$. This was first studied on \cite{BorGilM}, where the authors prove there that the case ($2,0$) provides a totally geodesic real projective plane $\mathbb{R}P^2$ minimally embedded in $T^1(\mathbb{S}^2)$.

%\section{Preliminaries}
\section{Preliminaries and the Proof of Theorem \ref{Main1}}

Let $M = \mathbb{S}^{2} \backslash \left\{N,S\right\}$ be the standard Euclidean sphere in which two antipodal points $N$ and $S$ are removed. Denote by $g$ the usual metric of $\mathbb{S}^2$ induced from $\mathbb{R}^3$, and by $\nabla$ the Levi-Civita connection associated to $g$. Consider the oriented orthonormal local frame $\left\{ e_1, e_2\right\}$ on $M$, where $e_1$ is tangent to the meridians and $e_2$ to the parallels. Let $\vec{v}$ be a unit vector field tangent to $M$ and consider another oriented orthonormal local frame $\left\{u_1 = \vec{v}^{\perp}, u_2 = \vec{v}\right\}$ on $M$ and its dual basis  $\left\{ \omega_1, \omega_2 \right\}$ compatible with the orientation of $\left\{ e_1, e_2\right\}$.

In dimension $2$, the volume of $\vec{v}$ is given by
\begin{eqnarray}\label{volreduces}
\vol(\vec{v}) = \int_{\mathbb{S}^2}{\sqrt{1+ \gamma^2 + \delta^2}}\nu,
\end{eqnarray}
where $\gamma=g(\nabla_{\vec{v}}\vec{v}, \vec{v}^{\perp})$ and $\delta = g(\nabla_{v^{\perp}}\vec{v}^{\perp}, \vec{v})$ are the geodesic curvatures associated to $\vec{v}$ and $\vec{v}^{\perp}$, respectively.

Let $\mathbb{S}^1_{\alpha}$ be the parallel of $\mathbb{S}^2$ at latitude $\alpha \in (-\frac{\pi}{2}, \frac{\pi}{2})$ and $\mathbb{S}^1_{\beta}$ be the meridian of $\mathbb{S}^2$ at longitude $\beta \in (0, 2\pi)$. 

\begin{prop}\label{integrando_volume} 
Let $\theta \in [0,\pi/2]$ be the oriented angle from $e_2$ to $\vec{v}$. If $\vec{v}=(\cos \theta) e_1 + (\sin \theta) e_2$ and $\vec{v}^{\perp}=(-\sin \theta)e_1 + (\cos \theta) e_2$, then
\[1 + \gamma^2 + \delta^2 = 1 + (\tan \alpha + \theta_1)^2 + \theta_2^2,\]
where $\theta_1 = d\theta(e_1)$, $\theta_2 = d\theta(e_2)$.
%, $\gamma$ is the geodesic curvature of the integral curves that are tangent to $\vec{v}$, and $\delta$ is the geodesic curvature of the curves that are orthogonal to $\vec{v}$.
\end{prop}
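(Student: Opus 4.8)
My plan is to compute the geodesic curvatures $\gamma$ and $\delta$ directly in terms of the connection 1-form of the background frame $\{e_1,e_2\}$ and the differential of the angle function $\theta$. First I would introduce the Levi-Civita connection 1-form $\varphi$ associated to $\{e_1,e_2\}$, defined by $\nabla e_1 = -\varphi\otimes e_2$ and $\nabla e_2 = \varphi\otimes e_1$; on the round sphere, with $e_1$ tangent to meridians and $e_2$ tangent to parallels, the classical structure equations give $\varphi = -(\tan\alpha)\,\eta$, where $\eta$ is the unit length form along the parallels (equivalently $\varphi(e_1)=0$ and $\varphi(e_2)=-\tan\alpha$, reflecting that meridians are geodesics while parallels have geodesic curvature $\tan\alpha$). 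I would pin down the sign conventions carefully here against the chosen orientation, since that is where bookkeeping errors creep in.

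Next, since $\vec v = (\cos\theta)e_1 + (\sin\theta)e_2$ and $\vec v^{\perp} = (-\sin\theta)e_1 + (\cos\theta)e_2$, I would differentiate: for any vector $X$,
\[
\nabla_X \vec v = \bigl(d\theta(X) + \varphi(X)\bigr)\,\vec v^{\perp},
\qquad
\nabla_X \vec v^{\perp} = -\bigl(d\theta(X) + \varphi(X)\bigr)\,\vec v,
\]
which follows because the map $\theta \mapsto \{\vec v, \vec v^{\perp}\}$ is a rotation of the frame and the connection form of the rotated frame is $d\theta + \varphi$. Taking $X = \vec v$ gives $\gamma = g(\nabla_{\vec v}\vec v, \vec v^{\perp}) = d\theta(\vec v) + \varphi(\vec v)$, and taking $X = \vec v^{\perp}$ gives $\delta = g(\nabla_{\vec v^{\perp}}\vec v^{\perp}, \vec v) = -\bigl(d\theta(\vec v^{\perp}) + \varphi(\vec v^{\perp})\bigr)$.

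Then I would expand these using $\vec v = (\cos\theta)e_1+(\sin\theta)e_2$ and $\vec v^{\perp}=(-\sin\theta)e_1+(\cos\theta)e_2$ together with $\theta_1 = d\theta(e_1)$, $\theta_2=d\theta(e_2)$, $\varphi(e_1)=0$, $\varphi(e_2)=-\tan\alpha$:
\[
\gamma = (\cos\theta)\theta_1 + (\sin\theta)\theta_2 - (\sin\theta)\tan\alpha,
\qquad
\delta = (\sin\theta)\theta_1 - (\cos\theta)\theta_2 - (\cos\theta)\tan\alpha.
\]
Wait — this does not obviously collapse to the claimed form, so the real content of the proposition must be that the specific $\theta$ arising from a unit vector field on $\mathbb S^2$ is constrained. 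In fact $\theta$ is not a free function: writing $\vec v$ in the meridian/parallel frame forces a relation between $\theta_1$, $\theta_2$ and the rotation of $\{e_1,e_2\}$ itself. I would instead recompute $\gamma^2+\delta^2$ as a sum and use the rotation identity $\gamma^2 + \delta^2 = \bigl(d\theta(\vec v)+\varphi(\vec v)\bigr)^2 + \bigl(d\theta(\vec v^\perp)+\varphi(\vec v^\perp)\bigr)^2$, which by orthogonality of $\{\vec v,\vec v^\perp\}$ equals $|d\theta + \varphi|^2 = (\theta_1)^2 + (\theta_2 - \tan\alpha)^2$. The sign discrepancy with the stated $(\tan\alpha+\theta_1)^2 + \theta_2^2$ must be reconciled by the orientation convention and by whether $\theta$ is measured from $e_2$ to $\vec v$ (as stated) rather than from $e_1$; swapping the roles of $e_1,e_2$ in the angle convention interchanges which component picks up the $\tan\alpha$ term. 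The main obstacle, then, is purely the sign/orientation bookkeeping: getting $\varphi$ with the correct sign, getting $d\theta+\varphi$ versus $d\theta-\varphi$ right for the rotated frame, and matching the "$e_2$ to $\vec v$" convention so that the $\tan\alpha$ lands in the $\theta_1$-slot as claimed. Once those conventions are fixed consistently, adding $1$ to $\gamma^2+\delta^2$ yields the stated identity.
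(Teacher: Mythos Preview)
Your connection-form strategy is sound and is in fact cleaner than the paper's proof, which simply expands $\gamma$ and $\delta$ term by term (four summands each), regroups to obtain
\[
\gamma=\cos\theta(\tan\alpha+\theta_1)+(\sin\theta)\,\theta_2,\qquad
\delta=\sin\theta(\tan\alpha+\theta_1)-(\cos\theta)\,\theta_2,
\]
and then squares and adds. The identity you isolate, that $\gamma^2+\delta^2$ is the squared norm of the $1$-form $d\theta+\omega_{12}$ (connection form of the rotated frame), is exactly why those two expressions collapse.

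Where your argument stalls is not a missing idea but a concrete misassignment of $\omega_{12}(e_i)$. In this paper one has $g(\nabla_{e_1}e_1,e_2)=\tan\alpha$ and $\nabla_{e_2}e_2=0$; with the standard convention $\nabla_X e_1=\omega_{12}(X)\,e_2$ this forces $\omega_{12}(e_1)=\tan\alpha$ and $\omega_{12}(e_2)=0$, not the other way round. (The sentence describing $e_1$ as the meridian direction is inconsistent with these formulas and with the explicit frame written later; trust the formulas.) Plugging these values into your own computation gives immediately
\[
\gamma=(d\theta+\omega_{12})(\vec v)=\cos\theta(\theta_1+\tan\alpha)+(\sin\theta)\,\theta_2,
\]
and analogously for $\delta$, so your ``does not obviously collapse'' worry disappears. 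In particular there is \emph{no} hidden constraint on $\theta$; that suspicion is a red herring and should be dropped. The conclusion then follows either from these explicit $\gamma,\delta$ (as the paper does) or, more elegantly, from
\[
\gamma^2+\delta^2=|d\theta+\omega_{12}|^2=(\theta_1+\omega_{12}(e_1))^2+(\theta_2+\omega_{12}(e_2))^2=(\tan\alpha+\theta_1)^2+\theta_2^2.
\]
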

\begin{proof} 
%As $\gamma$ is geodesic curvature of the integral curves that are tangent to $\vec{v}$ and $\delta$ is the geodesic curvature of the curves that are orthogonal to $\vec{v}$, where $\vec{v}=(\cos \theta) e_1 + (\sin \theta) e_2$ with $\theta \in [0,\pi/2]$, 
We have
%Pela definição de $\gamma$ temos
\begin{equation}\label{def_gamma}
\gamma	= g\big( \nabla_{\vec{v}}\vec{v}, \vec{v}^{\perp}\big)
				= g\big( \nabla_{(\cos \theta) e_1 + (\sin \theta) e_2} \left[(\cos \theta) e_1 + (\sin \theta) e_2 \right], \vec{v}^{\perp}  \big) \end{equation}
\[					
				= g\big( \nabla_{(\cos \theta) e_1} (\cos \theta) e_1, \vec{v}^{\perp}   \big) 
				+ g\big( \nabla_{(\sin \theta) e_2} (\cos \theta) e_1, \vec{v}^{\perp} \big) 
				+  g\big( \nabla_{(\cos \theta) e_1} (\sin \theta)e_2, \vec{v}^{\perp} \big)
				+ g\big( \nabla_{(\sin \theta) e_2} (\sin \theta) e_2, \vec{v}^{\perp} \big)
\]
and
\begin{equation}\label{def_delta}
\delta	= g\big( \nabla_{\vec{v}^{\perp}}\vec{v}^{\perp}, \vec{v}\big)
				= g\big( \nabla_{(-\sin \theta) e_1 + (\cos \theta) e_2} \left[-(\sin \theta) e_1 + (\cos \theta) e_2 \right], \vec{v}  \big) \end{equation}
\[					
				= g\big( \nabla_{(-\sin \theta) e_1} (-\sin \theta) e_1, \vec{v}   \big) 
				+ g\big( \nabla_{(\cos \theta) e_2} (-\sin \theta) e_1, \vec{v} \big) 
				+  g\big( \nabla_{(-\sin \theta) e_1} (\cos \theta)e_2, \vec{v} \big)
				+ g\big( \nabla_{(\cos \theta) e_2} (\cos \theta) e_2, \vec{v} \big)
\]
We write $\gamma$ and $\delta$ as the following sums
%Escrevemos
%We write equation (\ref{def_gamma}) as 
\[\gamma = A + B + C + D \enspace\text{and}\enspace \delta = A^{\prime} + B^{\prime} + C^{\prime} + D^{\prime},\]
%onde,
with
\[ A = g\big( \nabla_{(\cos \theta) e_1} (\cos \theta) e_1, \vec{v}^{\perp}  \big), \hspace{0.2cm}B  = g\big( \nabla_{(\sin \theta) e_2} (\cos \theta) e_1, \vec{v}^{\perp} \big),\]
\[C = g\big( \nabla_{(\cos \theta) e_1} (\sin \theta)e_2, \vec{v}^{\perp} \big) \hspace{0.2cm}\mbox{and}\hspace{0.2cm}  D = g\big( \nabla_{(\sin \theta) e_2} (\sin \theta) e_2, \vec{v}^{\perp} \big) \]
and
%\[\delta = A^{\prime} + B^{\prime} + C^{\prime} + D^{\prime}\]
%with
\[ A^{\prime} = g\big( \nabla_{(-\sin \theta) e_1} (-\sin \theta) e_1, \vec{v}   \big) \hspace{0.2cm},\hspace{0.2cm} B^{\prime}  = g\big( \nabla_{(\cos \theta) e_2} (-\sin \theta) e_1, \vec{v} \big) \]
\[ C^{\prime} = g\big( \nabla_{(-\sin \theta) e_1} (\cos \theta)e_2, \vec{v} \big) \hspace{0.2cm}\mbox{and}\hspace{0.2cm}  D^{\prime} = g\big( \nabla_{(\cos \theta) e_2} (\cos \theta) e_2, \vec{v} \big). \]

First observe that $\tan \alpha = g\big( \nabla_{e_1}e_1, e_2 \big)$ and $\nabla_{e_2}e_2 = 0$. By an elementary computation %of the terms of $\gamma$ and $\delta$, 
we obtain 
\[A = \sin^2\theta (\cos \theta) \theta_1 + \cos^3\theta \tan \alpha, \quad B = (\sin^3 \theta) \theta_2,\]		
\[C = (\cos^3\theta)\theta_1 + \sin^2 \theta \cos \theta \tan \alpha\enspace\mbox{and}\enspace D = (\sin \theta \cos^2) \theta_2 \]
and
\[A^{\prime} = (\sin \theta \cos^2 \theta) \theta_1 + \sin^3\theta \tan \alpha, \quad B^{\prime} = (-\cos^3 \theta) \theta_2. \]
\[C^{\prime} = (\sin^3\theta)\theta_1 + \sin \theta \cos^2 \theta \tan \alpha\enspace \text{and}\enspace D^{\prime} = -(\sin^2 \theta \cos \theta) \theta_2.\] 
Moreover,
\[
\gamma	= (\cos^3\theta \tan \alpha + \sin^2 \theta \cos \theta \tan \alpha) + (\sin^2\theta (\cos \theta) \theta_1 + (\cos^3\theta)\theta_1 ) + ((\sin^3 \theta) \theta_2 + (\sin \theta \cos^2) \theta_2)\]
\[
= \cos \theta \tan \alpha + (\cos \theta) \theta_1 + (\sin \theta) \theta_2 = \cos \theta (\tan \alpha + \theta_1) + (\sin \theta) \theta_2 
\]
and
\[
	\delta =  (\sin^3\theta \tan \alpha +  \sin \theta \cos^2 \theta \tan \alpha) + ((\sin \theta \cos^2 \theta) \theta_1 + (\sin^3\theta)\theta_1) + ((-\cos^3 \theta) \theta_2-(\sin^2 \theta \cos \theta) \theta_2). \]
\[ = \sin \theta \tan \alpha + (\sin \theta) \theta_1 - (\cos \theta) \theta_2 = \sin \theta (\tan \alpha + \theta_1) - (\cos \theta) \theta_2. 
\]
Finally,
\begin{equation}\label{curva_geode_tangente}
\gamma = \cos \theta (\tan \alpha + \theta_1) + (\sin \theta) \theta_2 
\end{equation}
\begin{equation}\label{curva_geode_ortogonal}
\delta = \sin \theta (\tan \alpha + \theta_1) - (\cos \theta) \theta_2.
\end{equation}
%Fabiano está chamando esta equação de (**)
From equations (\ref{curva_geode_tangente}) and (\ref{curva_geode_ortogonal}), we find
\[ 1 + \gamma^2 + \delta^2 = 1 + \left(\cos \theta (\tan \alpha + \theta_1) + (\sin \theta) \theta_2 \right)^2 +  \left( \sin \theta (\tan \alpha + \theta_1) - (\cos \theta) \theta_2 \right)^2 \]
\[
= 1 + \cos^2 \theta (\tan \alpha + \theta_1)^2 + (\sin \theta)^2 \theta_2^2 + \sin^2 \theta (\tan \alpha + \theta_1)^2 + (\cos^2 \theta) \theta_2^2 = 1 + (\tan \alpha + \theta_1)^2 + \theta_2^2.
\]
Therefore,
\[
1 + \gamma^2 + \delta^2 = 1 + (\tan \alpha + \theta_1)^2 + \theta_2^2.
\]
\end{proof}

%%%%%%%%%%%%%%%%%%%%%%%%%%%%%%%%%%%%%%%%%%%%%%%%%%%%%%%%%%%%%%%

Proposition \ref{integrando_volume} allows us to rewrite the volume functional as an integral depending on $\alpha$ and the derivatives of $\theta$
\begin{equation}\label{vol_usando_novo_integrando}
\vol(\vec{v})= \int_{M}{\sqrt{1+ \left( \tan \alpha + \theta_1\right)^2 + \theta_2^2}}.
\end{equation}

\begin{proof}[Proof of Theorem \ref{Main1}]
%We begin by noting that for any fixed angle 
Given $\varphi$ such that $0 \leq \varphi \leq 2\pi$,
%Observe que para qualquer ângulo $\varphi$ fixado, tal que $0 \leq \varphi \leq 2\pi$ temos
\[ 
1+ \left( \tan \alpha + \theta_1\right)^2 + \theta_2^2 =
\]
\[
= \left( \cos\varphi + \sin\varphi \left[ \sqrt{\left(\tan \alpha + \theta_1\right)^2 + \theta_2^2}\right] \right)^2
+ 
\left( -\sin\varphi + \cos \varphi \left[ \sqrt{\left(\tan \alpha + \theta_1\right)^2 + \theta_2^2}\right] \right)^2.
\]
Hence,
\[
\vol(\vec{v}) = \int_{M}{\sqrt{\left( \cos\varphi + \sin\varphi \left[ \sqrt{\left(\tan \alpha + \theta_1\right)^2 + \theta_2^2}\right] \right)^2
+ 
\left( -\sin\varphi + \cos \varphi \left[ \sqrt{\left(\tan \alpha + \theta_1\right)^2 + \theta_2^2}\right] \right)^2}}.
\]
Remember that
\[ 
1+ \left( \tan \alpha + \theta_1\right)^2 + \theta_2^2 \geq 
1+ \left( \tan \alpha + \theta_1\right)^2,
\]
implies
\[ 
\sqrt{1+ \left( \tan \alpha + \theta_1\right)^2 + \theta_2^2} \geq 
\sqrt{1+ \left( \tan \alpha + \theta_1\right)^2}.
\]
From the general inequality, $\sqrt{a^2 + b^2} \geq |a\cos \varphi + b \sin \varphi|$, for any $a$, $b$, $\varphi \in \mathbb{R}$, we have
\[
\sqrt{1+ \left( \tan \alpha + \theta_1\right)^2} \geq \left|\cos \varphi + \sin \varphi(\tan(\alpha)+\theta_1) \right|. 
\]
Therefore,
\[
\vol(\vec{v}) = \int_{M}{\sqrt{\big( \cos\varphi + \sin\varphi |\tan \alpha + \theta_1|\big)^2 + \big(-\sin\varphi + \cos \varphi |\tan \alpha + \theta_1| \big)^2}}.
\]
\begin{equation}\label{vol_theta2_zero}
\geq \int_{M}{\cos \varphi + \sin \varphi \left| \tan \alpha + \theta_1 \right|}
\end{equation}
This inequality is valid for all $\varphi$ such that $0 \leq \varphi \leq 2\pi$. \\
%Esta desigualdade é válida para todo $\varphi$ tal que $0 \leq \varphi \leq 2\pi$. \\
As a next step one consider the following conditions:
\begin{enumerate}[i)]
\item 
\[\varphi_k(\alpha) = \arctan \left( \tan\alpha + \frac{k-1}{\cos \alpha}\right);\]
\item
\[\tan\big(\varphi_k(\alpha)\big) = \tan\alpha + \frac{k-1}{\cos \alpha}.\]
\end{enumerate}
Replacing these conditions in equation (\ref{vol_theta2_zero}) we find
\begin{equation}\label{volume_com_alfa}
\vol(\vec{v}) \geq \int_{M}{ \left(\cos \big( \varphi_k(\alpha)\big) + \sin \big(\varphi_k(\alpha) \big)\right) \left| \tan \alpha + \theta_1 \right| \nu}.
\end{equation}
Condition i) provides that 
\[
\cos \big(\varphi_k(\alpha)\big) = 
\frac{\cos\alpha}{\sqrt{1 + (k-1)^2 + 2(k-1)\sin\alpha}}, \hspace{0.2cm} - \frac{\pi}{2} \leq \alpha \leq \frac{\pi}{2} 
\]
\[
\sin \big(\varphi_k(\alpha)\big) = \frac{ k-1 + \sin\alpha}{\sqrt{1 + (k-1)^2 + 2(k-1)\sin\alpha}}, \hspace{0.2cm} - \frac{\pi}{2} \leq \alpha \leq \frac{\pi}{2}. 
\]
Thus, we rewrite equation (\ref{volume_com_alfa}) as
\begin{equation}\label{***}
\lim_{\alpha_0 \to - \frac{\pi}{2}} \left[\int_{\alpha_0}^{\frac{\pi}{2}}\int_0^{\frac{\pi}{2}}\left( \frac{\cos\alpha}{\sqrt{1 + (k-1)^2 + 2(k-1)\sin\alpha}} + \frac{ k-1 + \sin\alpha}{\sqrt{1 + (k-1)^2 + 2(k-1)\sin\alpha}} \left| \tan \alpha + \theta_1 \right|\right) \right]d\beta d\alpha.
\end{equation}
We remember that the connection form $w_ {12}$ is given by
%Lembramos que a fórmula de conexão $w_{12}$ é dada por  
\[
\omega_{12} = \delta \omega_1 + \gamma \omega_2,
\]
where $\{ \omega_1, \omega_2 \}$ is dual basis of $\{\vec{u_1}, \vec{u_2} \}$. Let $i^* : \mathbb{S}^1_{\alpha} \hookrightarrow \mathbb{S}^2$ be the inclusion map, and $e_1= \sin \theta \vec{v}^{\perp} + \cos\theta \vec{v}$, then
\[i^*(\omega_{12})(e_1) = \delta \sin\theta + \gamma\cos \theta.\]
From equations (\ref{curva_geode_tangente}) and (\ref{curva_geode_ortogonal}),
%Since $\gamma = g\big(\nabla_{\vec{u_1}}\vec{u_1},\vec{u_2}\big)$, $-\delta = - g\big(\nabla_{\vec{u_2}}\vec{u_2},\vec{u_1}\big)$ and $\theta_2 = 0$, 
we have
\begin{eqnarray}\nonumber
i^*(\omega_{12}) &=& \sin\theta \left[\sin \theta \big(\tan(\alpha)+\theta_1\big)-\cos\theta\theta_2\right] + \cos \theta \left[\cos\theta \big(\tan(\alpha)+\theta_1\big) +\sin\theta \theta_2\right]\\ \nonumber
& = &  \tan(\alpha) + \theta_1.
\end{eqnarray}

Thus, from (\ref{***}) 
\begin{equation}\label{integralcomlimitemenor}
\vol(\vec{v})
\geq \lim_{\alpha_0 \to -\frac{\pi}{2}} \left( \int_{\alpha_0}^{\frac{\pi}{2}} \left( \int_{0}^{2\pi}\left(\frac{\cos \alpha + \big((k-1) + \sin \alpha\big)i^*\omega_{12}}{\sqrt{1+(k-1)^2 + 2(k-1)\sin \alpha}}\right) d\beta\right) d\alpha\right).
\end{equation}
To compute the integral of $i^*\omega_{12}$ with domain in the parallel of $\mathbb{S}^2$ at constant latitude $\alpha$, we follow the same arguments of Brito, Chac\'on and Johnson in the proof Theorem 1.1 of \cite{BCJ}. 
Denote by $\omega$ the connection form $\omega_{12}$ and %Considere a região 
\[
\mathbb{S}^2_{\alpha} = \{(x,y,z \in \mathbb{R}^3 ; z \geq \sin \alpha, \alpha_0 \leq \alpha \leq \frac{\pi}{2})\}.
\]
%Como estamos em $\mathbb{S}^2$ a $2$-forma $d\omega$ é dada por
The  $2$-form $d\omega$ is given by
\[
d\omega = \omega_1 \wedge \omega_2.
\]
A simple application of Stokes' theorem implies that
%Aplicando o Teorema de Stokes em $\mathbb{S}^2$ encontramos
\[
\int_{\mathbb{S}^2_{\alpha}}d\omega = 2\pi \big(I_{N}(\vec{v}) \big)- \int_{\mathbb{S}^1_{\alpha}} i^*\omega_{12}d\beta.
\]
Suppose that $I_{N}(\vec{v}) = \sup \{I_{N}(\vec{v}), I_{S}(\vec{v})\} = k$, we obtain
\begin{equation}\label{integral_pullback}
\int_{\mathbb{S}^1_{\alpha}}i^*\omega_{12}d\beta = 2\pi k - \mbox{Area}\big( \mathbb{S}^2_{\alpha}\big) = 2\pi k - 2\pi\big( 1 - \sin \alpha \big) = 2\pi\big( k - 1 + \sin \alpha \big).
\end{equation}
From equation (\ref{integralcomlimitemenor}), 
\[\vol(\vec{v})
\geq \lim_{\alpha_0 \to -\frac{\pi}{2}} \left( \int_{\alpha_0}^{\frac{\pi}{2}} \left( \int_{0}^{2\pi}\left(\frac{\cos \alpha + \big((k-1) + \sin \alpha\big)i^*\omega_{12}}{\sqrt{1+(k-1)^2 + 2(k-1)\sin \alpha}}\right) d\beta\right) d\alpha\right)
\]
\[
\geq \lim_{\alpha_0 \to -\frac{\pi}{2}}  \int_{\alpha_0}^{\frac{\pi}{2}} \left( \frac{\cos \alpha}{\sqrt{1+(k-1)^2 + 2(k-1)\sin \alpha}}\int_{0}^{2\pi}d\beta
 + \frac{\big((k-1) + \sin \alpha\big)}{\sqrt{1+(k-1)^2 + 2(k-1)\sin \alpha}}  \int_{\mathbb{S}^1_{\alpha}}i^*\omega_{12}d\beta \right) d\alpha
\]
\[
\geq \lim_{\alpha_0 \to -\frac{\pi}{2}}  \int_{\alpha_0}^{\frac{\pi}{2}} \left( \frac{2\pi\cos^2 \alpha}{\sqrt{1+(k-1)^2 + 2(k-1)\sin \alpha}} 
 + \frac{2\pi\big((k-1) + \sin \alpha\big)^2}{\sqrt{1+(k-1)^2 + 2(k-1)\sin \alpha}}\right) d\alpha,
\]
where the last inequality is obtained from (\ref{integral_pullback}). Therefore,
\[
\vol(\vec{v})
\geq 2 \pi \lim_{\alpha_0 \to -\frac{\pi}{2}}  \int_{\alpha_0}^{\frac{\pi}{2}} \left(\frac{\cos^2 \alpha + \big((k-1) + \sin \alpha\big)^2}{\sqrt{1+(k-1)^2 + 2(k-1)\sin \alpha}}\right)d\alpha.
\]
Analogously,
\[
\vol(\vec{v})
\geq 2 \pi \lim_{\alpha_0 \to -\frac{\pi}{2}}  \int_{\alpha_0}^{\frac{\pi}{2}}\left({\sqrt{1+(k-1)^2 + 2(k-1)\sin \alpha}}\right)d\alpha.
\]
An elementary computation gives us
\[
\vol(\vec{v})
\geq 2 \pi \int_{-\frac{\pi}{2}}^{\frac{\pi}{2}}{\sqrt{(k-2)^2 + 4(k-1)\sin^2 \left(\frac{\alpha}{2} + \frac{\pi}{4} \right)}}d\alpha.
\]

Assume that $t = \frac{\alpha}{2} + \frac{\pi}{4}$, then 
\begin{equation}\label{volumenavariavelt}
\vol(\vec{v})
\geq 2 \pi \int_{0}^{\frac{\pi}{2}}{\sqrt{(k-2)^2 + 4(k-1)\sin^2 t}}dt.
\end{equation}
Consider $k > 2$ and an ellipse $\xi_{k}$ given by 
\[\frac{x^2}{k^2} + \frac{y^2}{(k-2)^2} = 1.\]
Let $\mu$ be a parametrization for $\xi_{k}$ defined by
$\mu(t) = (k \cos t, (k-2)\sin t)$.
%e
%\[\mu'(t) = (k \sin t, -(k-2)\cos t),\]
Its length is
\begin{equation}\label{compr_elipse}
L(\xi_k) = \int_{0}^{2\pi}\left(\sqrt{ (k-2)^2 + 4(k-1)\sin^2 t} \right)dt.
\end{equation}
%\end{equation}
We replace the value found in (\ref{compr_elipse}) in inequality (\ref{volumenavariavelt}) and conclude
\[
\vol(\vec{v})
\geq 2 \pi \frac{1}{2}L(\xi_k) = \pi L(\xi_k).
\]
\end{proof}

\section{Area-minimizing vector fields $\vec{v}_k$ on $M$}

Consider the oriented orthonormal global frame $\left\{ \vec{e}_1, \vec{e}_2 \right\}$ on $\mathbb{S}^2\backslash\left\{ N,S\right\}$  such that
\[ \vec{e}_1(p) = \frac{1}{\sqrt{x^2 +y^2}}(-y,x,0)\]
\[ \vec{e}_2(p) = \left(\frac{xz}{\sqrt{x^2 +y^2}},\frac{yz}{\sqrt{x^2 +y^2}},- \sqrt{x^2 + y^2} \right),\]

where $p = (x,y,z) \in \mathbb{S}^2\backslash\left\{ N,S\right\}$.

\begin{defi} Let $p \in \mathbb{S}^2 \backslash \left\{N, S \right\}$. Let $k$ a positive integer such that $k \geq 1$. We define
\begin{enumerate}
\item If $k=1$, then $\vec{v}_1 = \vec{e}_2$;
\item If $k \geq 2$, then $\vec{v}_k(p) = \cos \theta(p)\vec{e}_1(p) + \sin \theta(p)\vec{e}_2(p)$, where $
\theta: \mathbb{S}^2 \backslash \left\{N, S \right\} \to \mathbb{R}$ satisfy 
\[\nabla_{\vec{e}_1(p)}\theta (p) = \frac{k-1}{\sqrt{x^2 + y^2}}\]%\enspace\text{e}\enspace \theta(1,0,0) = (0,0,1),\]
i.e., $\theta$ has constant variation along the parallel $x^2 + y^2 = \alpha$, with $\alpha$ constant.
\end{enumerate}
\end{defi}
\begin{center}
\includegraphics[height=8cm]{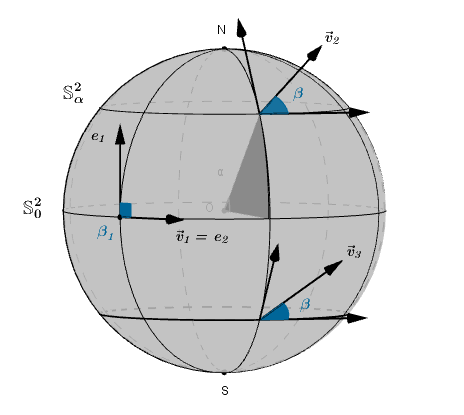}
\end{center}

If we use spherical coordinates $(\beta, \alpha)$, so that $p=(\cos\alpha\cos\beta, \cos\alpha\sin\beta, \sin\alpha)$, we can say that the vector $\vec{v}_k$ spins at a constant speed of rotation along the parallel $\alpha$, where $\alpha$ is constant. Moreover, $\vec{v}$ gives exactly $k-1$ turns when it passes the $\alpha$ parallel, with respect to the referential $\left\{\vec{e}_1, \vec{e} _2 \right\} $, and it gives $k$ turns with respect to a fixed polar referential, in this case,
\[
\nabla_{\vec{e}_1(p)} \theta(p) = \frac{k-1}{|\cos \alpha|}.
\]

\section{Proof of Theorem \ref{Main2}}

First step to proof Theorem \ref{Main2} is to rewrite the vector field $\vec{v}_k$ with respect to a global orthonormal special frame on $\S^2\backslash\{N,S\}$.\\
Let $P := P_{\gamma}:T_{\gamma(s_0)}\S^2 \longrightarrow T_{\gamma(s)}\S^2$ be the parallel transport, where $\gamma$ 
is the geodesic curve that connects the north pole with a point $p \in \S^2$.
 %and $\{e_1 = (1,0), e_2=(0,1)\}$ its canonical basisConsider the canonical identification $T_{\gamma(s_0)}\S^2 \approx \mathbb{R}^2$. .
Consider the canonical identification $T_{\gamma(s_0)}\S^2 \approx \mathbb{R}^2$. Assume that $\gamma(s_0) = (0,0,1) = N$, $\gamma(s) = p \in \S^2$, for some $s \in I$, and
%Assume us that %the following vectors
\[\vec{u_1}(p) = P_{\gamma} (\vec{e}_1) \enspace\text{e}\enspace \vec{u_2}(p) = P_{\gamma} (\vec{e}_2),\]
such that $\vec{u_1}(1,0,0) = \vec{e_1}(1,0,0)$ and $\vec{u_2}(1,0,0) = \vec{e_2}(1,0,0)$. Then,
\begin{equation}\label{campo_miniz_ref_modificado}
\vec{v_k}(p) =  \cos(kt)\vec{u_1}(p) + \sin(kt)\vec{u_2}(p).
\end{equation}
In terms of $\{\vec{e_1}, \vec{e_2}\}$%The definition above coincides with
%\begin{equation}\label{campo_miniz_ref_canonico}
\[\vec{v_k}(p) =  \cos\big((k-1)t\big)\vec{e_1}(p) + \sin\big((k-1)t\big)\vec{e_2}(p).\]
%\end{equation}
Consider $\mathbb{S}^2 = \mathbb{S}^2_+ \cup \mathbb{S}^2_-$, where $\mathbb{S}^{2}_{+}$ and $\mathbb{S}^{2}_{-}$ are the northern and southern hemisphere, respectively. Let $T_1 \S^2$ be unit tangent bundle. We denote $\pi_{+}$ the canonical projection of the $T_1\left(\S^2_{+}\right)$ from to $\mathbb{S}^2_+$. Let $\varphi$ be a global trivialization on $T_1\S^2_{+}$ given by
\begin{equation}\label{trivi_global}
\varphi: \S^2_{+}\times \S^1 \to \pi_{+}^{-1} \left(\S^2_{+} \right),\enspace  (p,\sigma) \mapsto (p, P_{\gamma}(\sigma)),\end{equation}
%where $P_{\gamma}(\sigma) \in T_p\S^2_{+}$ is the parallel transport from $\sigma \in \S^1 \subset T_{s_0}\S^2_{+}$ to the point $p \in \S^2_{+}$.
In this way, we have the commutative diagram
\[
\xymatrix{\mathbb{S}^{2}_{+}\times \S^{1} \ar[rr]^{\varphi} \ar[rd]_{p_1} & & \pi_{+}^{-1}(\mathbb{S}^{2}_{+}) \ar[dl]^{\pi_{+}} \\ & \mathbb{S}^{2}_{+} & }
\]
where $p_1: \mathbb{S}^{2}_{+}\times \S^{1} \to \mathbb{S}^{2}_{+}$ is the projection map. %at the first coordinate of $\mathbb{S}^{2}_{+}\times \S^{1}$ .\\
Let $D_{\pi/2} = \{ X \in T_{s(0)}\S^2_{+} \cong \mathbb{R}^2 ~:~ X \in B_{\pi/2}(0) \}$ be the open disk centered at the origin with radius $\pi/2$. Notice that $\exp\big(D_{\pi/2}\big) = \S^2_{+}$ and consider $D_{\pi/2} \times \S^1$ %é o toro flat sólido 
as a subset of $\mathbb{R}^4$. We define
\[ \psi: D_{\pi/2} \times \S^1  \to \S^2_{+}\times \S^1, \quad (X, \sigma) \mapsto (\exp(X), \sigma)\,\]
where $\exp : T_1 \S^2_{+} \to \S^2_{+}$. 
Composing this map $\psi$ with the global trivialization $\varphi$ we get a map 
\begin{equation}\label{composition}
\varphi\circ\psi: {D_{\pi/2} \times \S^1 \to \pi_{+}}^{-1}(\mathbb{S}^{2}_{+}), \quad (X, \sigma) \mapsto \left(\exp(X), P_{\exp(X)}(\sigma)\right).
\end{equation}
%\begin{equation}\label{composition}
%\xymatrix{D_{\pi/2} \times \S^1 \ar[r]^{\varphi\circ\psi} & {\pi_{+}}^{-1}(\mathbb{S}^{2}_{+}) \\ 
%(X, \sigma) \ar @{|->} @< 2pt>[r] & \left(\exp(X), P_{\exp(X)}(\sigma)\right) }
%\end{equation}
Let $\vec{v}_k$ be an unit vector field with the singularity $N$ of Poincar\'e index $k$, where $k$ is an even positive integer. We associate to the unit vector field $\vec{v}_k$ the surface $\mathcal{M}_k \subset D_{\pi/2} \times \S^1 \subset \mathbb{R}^4$ given by
\[\mathcal{M}_k := \left\{\big(r \cos(t), r\sin(t), \cos(kt), \sin(kt)\big) \in \mathbb{R}^4 ~:~ \enspace 0 \leq t \leq 2\pi\enspace\text{e}\enspace 0 \leq r \leq \pi/2 \right\}.\]

\begin{prop}\label{sup_regrada}
The surface $\mathcal{M}_k$ is a ruled surface of $\mathbb{R}^4$.
\end{prop}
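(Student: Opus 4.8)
The plan is to exhibit $\mathcal{M}_k$ explicitly as a one-parameter family of straight-line segments in $\mathbb{R}^4$, which is precisely what it means for $\mathcal{M}_k$ to be a ruled surface. First I would rewrite the defining formula by separating the two coordinate blocks: a point of $\mathcal{M}_k$ is $\big(r\cos t, r\sin t, \cos(kt), \sin(kt)\big)$ with $t \in [0,2\pi]$ and $r \in [0,\pi/2]$. For each fixed $t$, as $r$ ranges over $[0,\pi/2]$ the point traces out the segment
\[
\ell_t = \left\{ \big(0,0,\cos(kt),\sin(kt)\big) + r\big(\cos t, \sin t, 0, 0\big) ~:~ 0 \leq r \leq \pi/2 \right\},
\]
i.e. the segment through the curve point $c(t) := \big(0,0,\cos(kt),\sin(kt)\big)$ on $\{0\}\times\{0\}\times\S^1$ in the constant direction $w(t) := \big(\cos t, \sin t, 0,0\big)$, which is a unit vector in the first $\mathbb{R}^2$ factor. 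Thus $\mathcal{M}_k = \bigcup_{t\in[0,2\pi]} \ell_t$ is swept out by the rulings $\ell_t$.

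Next I would record that this presentation is a bona fide ruled-surface parametrization in the differential-geometric sense: the map $F(t,r) = c(t) + r\,w(t)$ from $[0,2\pi]\times[0,\pi/2]$ into $\mathbb{R}^4$ has $c$ and $w$ smooth and $2\pi$-periodic, with $w(t)$ a unit vector, so $F$ descends to a smooth map on $\S^1 \times [0,\pi/2]$. It remains only to check that $F$ is an immersion off the degenerate locus: $F_r = w(t)$ is always a unit vector, and $F_t = c'(t) + r\,w'(t) = \big(-r\sin t, r\cos t, -k\sin(kt), k\cos(kt)\big)$. Since $w(t)$ lies in the first $\mathbb{R}^2$ factor and the last two components of $F_t$ are $\big(-k\sin(kt),k\cos(kt)\big)$, which never vanish, $F_t$ is never proportional to $F_r$; hence $F_t$ and $F_r$ are linearly independent for every $(t,r)$, and $F$ is a smooth immersion realizing $\mathcal{M}_k$ as a ruled surface (immersed along the core circle $r=0$ and embedded for $r>0$).

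There is no serious obstacle here; the statement is essentially a repackaging of the definition of $\mathcal{M}_k$. The only point requiring a word of care is the behavior along the core curve $r = 0$: there all the segments $\ell_t$ share the "spine" $c(t)$, but since the directrix $c$ itself is a regular closed curve (its velocity $\big(-k\sin(kt), k\cos(kt)\big)$ has constant norm $k$) and the ruling directions $w(t)$ are transverse to it, $\mathcal{M}_k$ is a smoothly immersed ruled surface there as well. I would close by noting that this ruling is exactly the one induced by the structure of $\vec{v}_k$ as spinning at constant speed along each parallel, so the rulings $\ell_t$ are the images under $\varphi\circ\psi$ of the fibre directions of $D_{\pi/2}\times\S^1$ over points along a fixed meridian.
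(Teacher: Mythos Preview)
Your proof is correct and in fact cleaner than the paper's own argument: you simply observe that the defining parametrization $F(t,r)=c(t)+r\,w(t)$ is linear in $r$ for each fixed $t$, so $\mathcal{M}_k=\bigcup_t \ell_t$ is manifestly ruled, and then you verify immersivity. This works for every integer $k$, not just even $k$.

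The paper proceeds differently. Rather than stopping at the obvious half-rulings $\ell_t$ with $r\in[0,\pi/2]$, it shows that when $k$ is even each $\ell_t$ extends \emph{through} the core curve $O_t=(0,0,\cos(kt),\sin(kt))$ to the opposite side: the segment $\overline{G_tO_t}$ with endpoint $G_t=(-\tfrac{\pi}{2}\cos t,-\tfrac{\pi}{2}\sin t,\cos(kt),\sin(kt))$ also lies in $\mathcal{M}_k$, because the substitution $t'=t+\pi$, $r'=-r$ sends it onto $\ell_{t+\pi}$ (this is exactly where $k$ even is used, via $\cos(k(t+\pi))=\cos(kt)$). Thus the paper's rulings are the full segments $\overline{G_tH_t}$ of length $\pi$, and antipodal half-rulings $\ell_t$, $\ell_{t+\pi}$ in your description glue into a single line. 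What this buys is the M\"obius structure: the family of full rulings is parametrized by $t\in[0,\pi]$ with a half-twist, which is precisely the content of the next proposition. Your argument proves the stated proposition more economically, but the paper's decomposition is doing double duty as a setup for Proposition~3.

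One small quibble: your closing remark that the rulings are ``the fibre directions of $D_{\pi/2}\times\S^1$'' is not quite right --- the rulings are the radial lines in the $D_{\pi/2}$ factor (the base directions, corresponding to meridians of $\S^2$ under $\exp$), not the $\S^1$ fibres. This does not affect the proof.
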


\begin{proof} 
Define
\[G_t : = \left\{ \left(-\frac{\pi}{2}\cos(t), -\frac{\pi}{2}\sin(t), \cos(kt), \sin(kt) \right) \in \mathbb{R}^4 ~:~ -\pi \leq t \leq \pi \right\} \]
\[H_t : = \left\{  \left(\frac{\pi}{2}\cos(t), \frac{\pi}{2}\sin(t), \cos(kt), \sin(kt) \right) \in \mathbb{R}^4 ~:~ -\pi \leq t \leq \pi \right\}  \]
\[O_t : = \left\{  \left(0, 0, \cos(kt), \sin(kt) \right) \in \mathbb{R}^4 ~:~ -\pi \leq t \leq \pi \right\},  \]
we are going to show that 
\[\mathcal{M}_k = \overline{G_tO_t} \bigcup \overline{O_tH_t}. \]
%Cos(a+b)=cos(a)cos(b)-sen(a)sen(b)

Consider $q \in \mathcal{M}_k$, $q_1 \in \overline{G_tO_t}$ and $q_2 \in \overline{O_tH_t}$. If $\alpha := -\frac{2 r}{ \pi}$ and $\beta := \frac{2 r}{ \pi}$, then $q$ can be written as $\alpha q_1$ or as $\beta q_2$. It provides us $q \in \overline{G_tO_t} \bigcup \overline{O_tH_t}$. So, $\mathcal{M}_k \subset \overline{G_tO_t} \bigcup \overline{O_tH_t}$. 

Now, we show that $\overline{G_tO_t}$ and $\overline{O_tH_t}$ are subsets of $\mathcal{M}_k$.
A point $q_1$ belongs to $\overline{G_tO_t}$ if, and only if, there exist $\alpha \in [0,1]$ such that
% \enspace q_1 = O_t + \alpha \overline{G_tO_t} = O_t + \alpha \left( -\frac{\pi}{2}\cos(t), -\frac{\pi}{2}\sin(t), 0, 0 \right) \] 
\[ q_1 = \alpha\left( -\frac{\pi}{2} \cos(t), -\frac{\pi}{2}\sin(t), \cos(kt), \sin(kt)\right) +(1-\alpha) \left(0, 0, \cos(kt), \sin(kt)\right)\]
We observe that
\begin{equation}\label{eq1}
\left(-\frac{\pi}{2}\cos(t), -\frac{\pi}{2}\sin(t)\right) = \left(\frac{\pi}{2}\cos(\pi + t), \frac{\pi}{2}\sin(\pi + t)\right),
\end{equation}
and since $k=2k'$, for some $k' \in \mathbb{Z}$, we have 
\begin{equation}\label{eq2}
\left(\cos\big(k(\pi + t)\big), \sin\big(k(\pi + t)\big)\right) = \left(\cos(2k'\pi + kt), \sin(2k'\pi + kt)\right) = \left(\cos(kt), \sin(kt)\right).
\end{equation}
From equations (\ref{eq1}) and (\ref{eq2}) we conclude
\[ q_1 = \left( \frac{\alpha\pi}{2}\cos(\pi + t), \frac{\alpha\pi}{2}\sin(\pi + t), \cos\big(k(\pi + t)\big) , \sin\big(k(\pi + t)\big)\right). \]

Notice that $\alpha$ satisfies
\[ 0 \leq \alpha \leq 1 \enspace\Rightarrow\enspace 0 \leq \alpha\frac{\pi}{2} \leq 1, \enspace r^{\prime}:= \alpha \frac{\pi}{2}\enspace\Rightarrow\enspace  0 \leq r^{\prime} \leq \frac{\pi}{2}.\]
Thus, we rewrite $q_1$ as
%Então reescrevemos $q_1$ como
\[q_1 = \left( r^{\prime}\cos(\pi + t), r^{\prime}\sin(\pi + t), \cos\big(k(\pi + t)\big) , \sin\big(k(\pi + t)\big)\right). \]

Assume that $t' = \pi + t$, then 
\[-\pi \leq t \leq \pi \enspace\Rightarrow\enspace -\pi \leq t' - \pi \leq \pi \enspace\Rightarrow\enspace 0 \leq t^{\prime} \leq 2\pi. \]Therefore,
\[q_1 = \left(r^{\prime}\cos(t^{\prime}), r^{\prime}\sin(t^{\prime}), \cos(kt^{\prime}) , \sin(kt^{\prime})\right) \in \mathcal{M}_k, \]
so we conclude that $\in\overline{G_tO_t} \subset \mathcal{M}_k$.

Observe that a point $q_2$ belongs to $\overline{O_tH_t}$ if, and only if, there exist $\beta \in [0,1]$ such that 
\[q_2 = (1-\beta)\left( \frac{\pi}{2} \cos(t), \frac{\pi}{2}\sin(t), \cos\big(k(\pi + t)\big), \sin\big(k(\pi + t)\big)\right) +\beta \left(0, 0, \cos\big(k(\pi + t)\big), \sin\big(k(\pi + t)\big)\right).\]
Therefore, an analogously computation shows that $q_2 \in \mathcal{M}_k$. Thereby, $\overline{G_tO_t} \bigcup \overline{O_tH_t} \subset \mathcal{M}_k$. %So, $\mathcal{M}_k = \overline{G_tO_t} \bigcup \overline{O_tH_t}$. We conclude that $\mathcal{M}_k$ is a ruled surface. 
\end{proof}
If $\alpha=1/2$, $\beta = -1/2$ and $t=0$, then $q_1=q_2 = \left(-\frac{\pi}{4} , 0, 1, 0 \right)$. Therefore, this union is not disjoint union. 

\begin{prop}
The surface $\mathcal{M}_k$ is the image of a smooth immerion of Moebius band.
\end{prop}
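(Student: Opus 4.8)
The plan is to exhibit an explicit smooth immersion of an abstract Moebius band whose image is $\mathcal{M}_k$. Set
\[
F\colon \mathbb{R}\times\left[-\tfrac{\pi}{2},\tfrac{\pi}{2}\right]\longrightarrow\mathbb{R}^4,\qquad
F(t,r)=\big(r\cos t,\ r\sin t,\ \cos(kt),\ \sin(kt)\big),
\]
so that, for fixed $t$, the curve $r\mapsto F(t,r)$ is precisely the ruling segment $\overline{G_tH_t}=\overline{G_tO_t}\cup\overline{O_tH_t}$ appearing in the proof of Proposition~\ref{sup_regrada}. The first step is to observe that, since $k$ is even, $\cos(k\pi)=1$ and $\sin(k\pi)=0$, whence $F(t+\pi,-r)=F(t,r)$ identically; thus $F$ is invariant under the free, properly discontinuous $\mathbb{Z}$-action generated by $g(t,r)=(t+\pi,-r)$. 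I would then pass to the quotient $\mathbb{M}:=\big(\mathbb{R}\times[-\pi/2,\pi/2]\big)/\mathbb{Z}$, a smooth compact surface with boundary. Taking $[0,\pi]\times[-\pi/2,\pi/2]$ as a fundamental domain, $\mathbb{M}$ is obtained by the identification $(0,r)\sim(\pi,-r)$, which is the standard presentation of the Moebius band: the generator reverses the orientation of the interval factor, so $\mathbb{M}$ is non-orientable, it deformation retracts onto its core circle $\{r=0\}/\mathbb{Z}$, and its boundary is the single circle image of $r=\pm\pi/2$. Since $F$ is $\mathbb{Z}$-invariant and the quotient projection $q$ is a covering map (hence a local diffeomorphism), $F$ descends to a smooth map $\bar F\colon\mathbb{M}\to\mathbb{R}^4$.

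Next I would check that $F$, and therefore $\bar F$, is an immersion, which is a one-line rank computation: $\partial_r F=(\cos t,\sin t,0,0)$ is a unit vector, while the last two coordinates of $\partial_t F=(-r\sin t,\ r\cos t,\ -k\sin(kt),\ k\cos(kt))$ form the nonvanishing vector $k(-\sin kt,\cos kt)$. Hence a linear relation $a\,\partial_r F+b\,\partial_t F=0$ forces $b=0$ and then $a=0$, so $dF$ has rank $2$ everywhere; since $d\bar F=dF\circ(dq)^{-1}$, the descended map is an immersion of $\mathbb{M}$. Finally I would verify $\bar F(\mathbb{M})=\mathcal{M}_k$: the image of $\bar F$ equals $F\big([0,\pi]\times[-\pi/2,\pi/2]\big)$; for $r\in[0,\pi/2]$ the point $F(t,r)$ belongs to $\mathcal{M}_k$ by definition with angular parameter $t$, while for $r\in[-\pi/2,0)$, rewriting exactly as in equations (\ref{eq1}) and (\ref{eq2}), $F(t,r)=\big((-r)\cos(t+\pi),(-r)\sin(t+\pi),\cos k(t+\pi),\sin k(t+\pi)\big)\in\mathcal{M}_k$ with angular parameter $t+\pi$. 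As $t$ runs over $[0,\pi]$ these two cases sweep the angular parameters $[0,\pi]$ and $[\pi,2\pi]$ respectively, so every point of $\mathcal{M}_k$ is attained, and conversely every point of $\mathcal{M}_k$ already has the form $F(t,r)$ with $r\in[0,\pi/2]$.

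I do not anticipate a genuine obstacle. The only place the hypothesis enters is the half-turn identity $F(t+\pi,-r)=F(t,r)$, valid precisely because $k$ is even — and this is exactly the identification that fuses the two halves $\overline{G_tO_t}$ and $\overline{O_tH_t}$ of the ruled surface into a one-sided band rather than a cylinder. The step that most deserves care is making the quotient argument precise: confirming that $g$ acts freely and properly discontinuously (so that $\mathbb{M}$ is honestly a smooth surface with boundary and $q$ a local diffeomorphism) and that, because $g$ reverses the interval factor, $\mathbb{M}$ is the Moebius band and not the annulus.
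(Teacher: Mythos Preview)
Your argument is correct and follows essentially the same route as the paper: both use the parametrization $(r,t)\mapsto(r\cos t,\,r\sin t,\,\cos kt,\,\sin kt)$, and the identification $(t,r)\sim(t+\pi,-r)$ you exploit is exactly the gluing implicit in the paper's description via the segments $\overline{G_tO_t}$ and $\overline{O_tH_t}$. Your version is in fact more complete than the paper's, which simply declares that the map $j:[-\tfrac{\pi}{2},\tfrac{\pi}{2}]\times[0,2k\pi]\to\mathbb{R}^4$ is a $C^\infty$ immersion whose image is a Moebius band, whereas you supply the quotient construction of the Moebius band, the rank computation, and the surjectivity onto $\mathcal{M}_k$.
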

\begin{proof}
Define $j: \left[-\frac{\pi}{2}, \frac{\pi}{2}\right] \times \left[0, 2k\pi\right] \to D_{\frac{\pi}{2}} \times \S^1$ by
\[j(r,t) = (r \cos(t), r\sin(t), \cos(kt), \sin(kt)).\]
The map $j$ is an immersion of class $C^{\infty}$ in which $\mbox{Im}(j)$ is Moebius band. 
From definition of $\mathcal{M}_k$ we have $\mathcal{M}_k = \mbox{Im}(j).$
\end{proof}

%The second main result of this paper for area-minimizing vector fields with two singularities in the Euclidean sphere whose Poincar\'e index of a singularity is even and in the other singularity is odd, we prove that the minimal surface given by the topological closure of area-minimizing vector field is the image of immersion of class $C^{\infty}$the Klein bottle in $T^1\S^2$.

%For area-minimizing vector fields with one singularity in the Euclidean sphere, O. Gil-Medrano and V. Borreli proved that the only minimal surfaces of $T^1{\S^2}(r)$ homeomorphic to the projective plane arising from vector fields without boundary are Pontryagin cycles and among unit vector fields of $\S^2$ with one singularity those of least area are Pontryagin fields and no others, see \cite{BorGilM}.

Denote by $\overline {\vec{v_k} (\S^{2}_{+} \backslash \{N\})}$,\enspace$\overline {\vec{v_k} (\S^{2}_{-} \backslash \{S\})}$ and $\overline {\vec{v_k} (\S^{2} \backslash \{N, S\})}$ the topological closures of $\vec{v_k}$ on $\S^{2}_{+} \backslash \{N\}$, $\S^{2}_{-} \backslash \{S\}$ and $\S^{2} \backslash \{N,S\}$, respectively.

\begin{prop}\label{indice_par} If $k$ is even, then $\overline {\vec{v_k} (S^{2}_{+} \backslash \{N \})}$ is the image of the immersion $\varphi \circ \psi$ of the Moebius band $\mathcal{M}_k$ in $T_1 \S^{2}$.
\end{prop}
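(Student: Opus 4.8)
The idea is to identify the graph of $\vec{v}_k$ over $\mathbb{S}^2_+\setminus\{N\}$, viewed as a subset of $T_1\mathbb{S}^2$, with the image under $\varphi\circ\psi$ of the part of $\mathcal{M}_k$ with $r>0$, and then to check that this identification is compatible with topological closure. I first unwind the definitions. Write $p\in\mathbb{S}^2_+\setminus\{N\}$ in geodesic polar coordinates about $N$: $p=\exp_N(X)$ with $X=r(\cos t,\sin t)\in T_N\mathbb{S}^2\cong\mathbb{R}^2$, $0<r\le\pi/2$, and $t$ the longitude of $p$. By (\ref{campo_miniz_ref_modificado}),
\[
\vec{v}_k(p)=\cos(kt)\,\vec{u}_1(p)+\sin(kt)\,\vec{u}_2(p)=P_{\gamma}\bigl(\cos(kt),\sin(kt)\bigr),
\]
where $\gamma$ is the unique minimizing geodesic from $N$ to $p$, $P_\gamma$ the associated parallel transport, and $(\cos kt,\sin kt)$ denotes the corresponding unit vector of $T_N\mathbb{S}^2\cong\mathbb{R}^2$. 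Comparing with the definitions (\ref{trivi_global}) of $\varphi$ and (\ref{composition}) of $\varphi\circ\psi$, this says exactly
\[
\bigl(p,\vec{v}_k(p)\bigr)=\varphi\bigl(p,(\cos kt,\sin kt)\bigr)=(\varphi\circ\psi)\bigl(r\cos t,\,r\sin t,\,\cos kt,\,\sin kt\bigr);
\]
letting $r$ and $t$ range, $\vec{v}_k\bigl(\mathbb{S}^2_+\setminus\{N\}\bigr)=(\varphi\circ\psi)\bigl(\mathcal{M}_k\cap\{r>0\}\bigr)$.

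Next I treat the closures. The map $\varphi\circ\psi$ extends to a smooth map on $\overline{D_{\pi/2}}\times\mathbb{S}^1$: for the round sphere $\exp_N$ restricts to a diffeomorphism of the closed disc $\overline{D_{\pi/2}}$ onto the closed northern hemisphere (there are no conjugate points of $N$ at distance less than $\pi$), and parallel transport along the radial geodesics from $N$ depends smoothly on the endpoint, including the degenerate geodesic at $X=0$, along which it is the identity; hence the extension sends $(0,\sigma)$ to $(N,\sigma)$. Being the composite of the diffeomorphism $\varphi$ with the injective map $(X,\sigma)\mapsto(\exp_N X,\sigma)$, this extension is injective, and since $\overline{D_{\pi/2}}\times\mathbb{S}^1$ is compact it is a homeomorphism onto its image, which is compact and therefore closed in $T_1\mathbb{S}^2$. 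Now $\mathcal{M}_k$ is a compact subset of $\overline{D_{\pi/2}}\times\mathbb{S}^1$, and $\mathcal{M}_k\cap\{r>0\}$ is dense in $\mathcal{M}_k$: any point with $r=0$ is the limit along a fixed $t$ of points with $r>0$, and the curve $O_t$ fills out all of $\{0\}\times\mathbb{S}^1$ because $k\neq0$. Since a homeomorphism carries closures to closures,
\[
\overline{\vec{v}_k\bigl(\mathbb{S}^2_+\setminus\{N\}\bigr)}=\overline{(\varphi\circ\psi)\bigl(\mathcal{M}_k\cap\{r>0\}\bigr)}=(\varphi\circ\psi)(\mathcal{M}_k).
\]

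Finally, by the preceding proposition $\mathcal{M}_k=\operatorname{Im}(j)$ for a smooth immersion $j$ of a Moebius band, so composing with the smooth embedding $\varphi\circ\psi$ of $\overline{D_{\pi/2}}\times\mathbb{S}^1$ just produced shows that $(\varphi\circ\psi)\circ j$ is again a smooth immersion of a Moebius band, whose image equals $(\varphi\circ\psi)(\mathcal{M}_k)=\overline{\vec{v}_k(\mathbb{S}^2_+\setminus\{N\})}$; this is the assertion. I expect the main obstacle to be in the second step --- rigorously extending $\varphi\circ\psi$ continuously (indeed smoothly) across the two boundary circles of its domain: across $\partial D_{\pi/2}$, which $\exp_N$ wraps onto the equator, and, more delicately, at the removed singularity $X=0$, where the graph of $\vec{v}_k$ accumulates onto the whole fibre $\pi_+^{-1}(N)$ and one must verify that parallel transport along a geodesic collapsing to $N$ tends to the identity --- together with the bookkeeping of the $r\to0^+$ and $r\to(\pi/2)^-$ limits. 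The first step, by contrast, is just substitution of (\ref{campo_miniz_ref_modificado}), (\ref{trivi_global}), and (\ref{composition}).
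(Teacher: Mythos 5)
Your proposal is correct and follows essentially the same route as the paper: the heart of both arguments is the substitution of (\ref{campo_miniz_ref_modificado}), (\ref{trivi_global}) and (\ref{composition}) showing that $(\varphi\circ\psi)\bigl(r\cos t, r\sin t,\cos kt,\sin kt\bigr)=(p,\vec{v}_k(p))$. The only difference is that you carefully justify the reverse inclusion $\overline{\vec{v}_k(\mathbb{S}^2_+\setminus\{N\})}\subset(\varphi\circ\psi)(\mathcal{M}_k)$ via the compactness/injectivity of the extended trivialization and the density of $\{r>0\}$ in $\mathcal{M}_k$, a point the paper disposes of in one sentence.
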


\begin{proof} 
The topological closure of $\vec{v}_k$ is the image of the global trivialization given by (\ref{trivi_global}). Thereby, it remains to prove that $\varphi \circ \psi\left(\mathcal{M}_k\right) \subset \overline {\vec{v_k} (\mathbb{S}^{2}_{+} \setminus \{N \})}.$ Given $(x_1,x_2,x_3,x_4)\in \mathcal{M}_k$, where $(x_1,x_2) = (r\cos(t), r\sin(t)) \in D_{\pi/2}$ and $(x_3,x_4) = (\cos(kt), \sin(kt)) \in \S^1$ we have,
\[ \varphi \circ \psi(x_1,x_2,x_3,x_4) = \left( \exp_{0}(r\cos(t), r\sin(t)), P_{\exp_{0}}(\cos(kt), \sin(kt)) \right). \]
Consider $\gamma(r) \in T_{\gamma(s)\mathbb{S}^2_+}$. From equation (\ref{campo_miniz_ref_modificado}) we obtain
\[ \left( \exp_{0}(r\cos(t), r\sin(t)), P_{\exp_{0}}(\cos(kt), \sin(kt)) \right) = \left( \gamma(r), \cos(kt)P(e_1) + \sin(kt)P(e_2) \right)\]
\[ = \left( p, \cos(kt)u_1 + \sin(kt)u_2\right) = (p, \vec{v}_k).\]
Thus, $\varphi \circ \psi(x_1,x_2,x_3,x_4) = (p, \vec{v}_k) \in \overline {\vec{v_k} (S^{2}_{+} \setminus \{N \})}$. Therefore, $\varphi \circ \psi\left(\mathcal{M}_k\right) \subset \overline {\vec{v_k} (\mathbb{S}^{2}_{+} \setminus \{N \})}$.
\end{proof}

\begin{prop}\label{indice_impar} If $k$ is even, then  $\overline{\vec{v_k} (\mathbb{S}^{2}_{-} \setminus \{S \})}$ is the image of the immersion $\varphi \circ \psi$ of the Moebius band $\mathcal{M}_k$ in $T_1 \S^{2}$.
\end{prop}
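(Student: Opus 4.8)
The plan is to run the construction of Section~4 with the south pole $S$ playing the role of $N$, and then conclude exactly as in Proposition~\ref{indice_par}. So I would first replace the parallel transport along the geodesics issuing from $N$ by the parallel transport $P^-_\gamma$ along the meridians issuing from $S$, producing a global orthonormal frame $\{\vec u_1^-,\vec u_2^-\}$ on $\S^2_-\setminus\{S\}$ normalized to agree with $\{\vec e_1,\vec e_2\}$ at a fixed equatorial point. This gives a global trivialization $\varphi^-\colon\S^2_-\times\S^1\to\pi_-^{-1}(\S^2_-)$ as in \eqref{trivi_global}, a map $\psi^-\colon D_{\pi/2}\times\S^1\to\S^2_-\times\S^1$, $(X,\sigma)\mapsto(\exp_S(X),\sigma)$, and a composite $\varphi^-\circ\psi^-$ as in \eqref{composition}; since $\exp_S(D_{\pi/2})=\S^2_-$, the analogous commutative diagram holds. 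Throughout I read the ``$\varphi\circ\psi$'' of the statement as this $S$-adapted map. Note that $\mathcal M_k$ is the same subset of $\mathbb R^4$ as before, so Proposition~\ref{sup_regrada} and the Moebius band statement apply verbatim.

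The heart of the matter is the south-pole analogue of \eqref{campo_miniz_ref_modificado}: writing $r$ for the geodesic distance to $S$ and $t$ for the geodesic polar angle around $S$ (oriented compatibly with $\S^2$), one must check that on $\S^2_-\setminus\{S\}$
\[
\vec v_k(p)=\cos(kt)\,\vec u_1^-(p)+\sin(kt)\,\vec u_2^-(p).
\]
Since $\theta$ has constant variation along parallels, $\vec v_k$ rotates rigidly along each parallel, and passing to the $S$-based frame only modifies its winding relative to $\{\vec e_1,\vec e_2\}$ by the holonomy along the meridians from $S$. The delicate point is that the polar angle around $S$ runs opposite to that around $N$, so one must verify both that the winding number appearing is again exactly $k$ and that the sign of $\sin(kt)$ comes out right; this is precisely where $k$ even is used, via the $2\pi$-periodicity exploited in \eqref{eq1}--\eqref{eq2} (where $k=2k'$ was needed), which absorbs the orientation reversal just as in Proposition~\ref{sup_regrada}. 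I expect this bookkeeping to be the only real obstacle --- everything else is a word-for-word transcription of Section~4.

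Granting the displayed identity, the argument closes as in Proposition~\ref{indice_par}. Under $(\varphi^-)^{-1}$ the graph $\{(p,\vec v_k(p)):p\in\S^2_-\setminus\{S\}\}$ becomes the graph of $p\mapsto(\cos kt,\sin kt)$; its topological closure is carried by $\psi^-$ onto $\mathcal M_k$, the core circle $r=0$ supplying the fibre over $S$ and the boundary circle $r=\pi/2$ the fibre over the equator, so $\overline{\vec v_k(\S^2_-\setminus\{S\})}=\varphi^-\circ\psi^-(\mathcal M_k)$. For the reverse inclusion one computes, for a point $(r\cos t,r\sin t,\cos kt,\sin kt)\in\mathcal M_k$ with $r<\pi/2$,
\[
\varphi^-\circ\psi^-(r\cos t,r\sin t,\cos kt,\sin kt)=\bigl(\exp_S(r\cos t,r\sin t),\,\cos(kt)\vec u_1^-+\sin(kt)\vec u_2^-\bigr)=(p,\vec v_k(p)),
\]
which lies in $\overline{\vec v_k(\S^2_-\setminus\{S\})}$, and the cases $r=\pi/2$ and $r=0$ follow by taking limits. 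Therefore $\overline{\vec v_k(\S^2_-\setminus\{S\})}$ is the image of the immersion $\varphi^-\circ\psi^-$ of the Moebius band $\mathcal M_k$ in $T_1\S^2$, which, together with the northern piece of Proposition~\ref{indice_par}, yields the Klein bottle of Theorem~\ref{Main2}.
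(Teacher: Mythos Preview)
The paper gives no explicit proof of this proposition; it is stated immediately after Proposition~\ref{indice_par} and left as the evident southern-hemisphere analogue. Your proposal carries out exactly that analogy --- replacing $N$ by $S$, building the $S$-based trivialization $\varphi^-\circ\psi^-$, and rerunning the argument of Proposition~\ref{indice_par} --- so it matches the paper's intended approach and in fact supplies more detail than the paper itself.
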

Assume that $\vec{v}_k$ an unit vector field such that $I_{\vec{v}_k}(N)$ is positive and $I_{\vec{v}_k}(S)$ is negative, which the both are even interges numbers.
 %Since the Euler number of $\S^2$ is $2$, we have $I_{\vec{v}_k}(N) + I_{\vec{v}_k}(S) = 2$. 

\begin{proof}[Proof of Theorem \ref{Main2}] 
An immersion of class $C^{\infty}$ of the Klein bottle in $T_1\S^2$ is obtained by gluing two images of the Moebius band along the boundary given by Propositions \ref{indice_par} and \ref{indice_impar}. As $\vec{v}_k$ is area-minimizing vector field in its topological conjugation class, it follows that the section seen as surface in $T_1\S^2$ is geometrically minimal, i.e., its has zero mean curvature. Therefore, the topological closure of $\vec{v_k}$ is a minimal surface in $T_1\S^2$.

\begin{center}
\includegraphics[height=6cm]{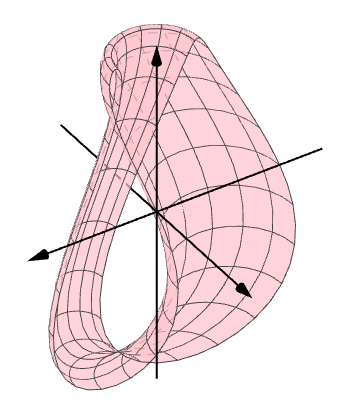}
\includegraphics[height=6cm]{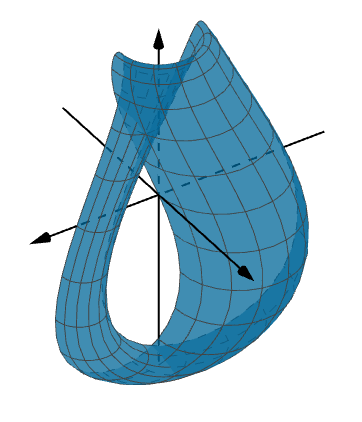}
\includegraphics[height=6.2cm]{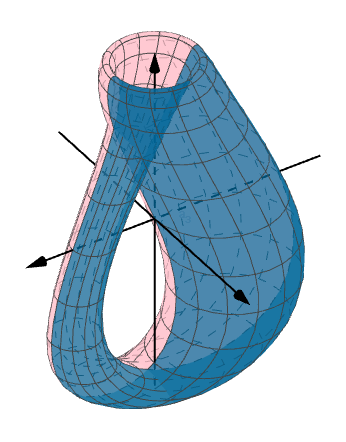}
\end{center}

\end{proof}

\end{document}